\newcommand{\mbC}{\mathbb C}
\newcommand{\oM}{\overline{\mathcal M}}
\def\cM{{\mathcal{M}}}
\def\oM{{\overline{\mathcal{M}}}}
\def\mbQ{{\mathbb Q}}
\def\d{{\partial}}
\newcommand{\<}{\left<}
\renewcommand{\>}{\right>}
\newcommand{\DR}{\mathrm{DR}}
\newcommand{\Coef}{\mathrm{Coef}}
\newcommand{\mcF}{\mathcal F}
\newcommand{\mbR}{\mathbb R}
\newcommand{\mcG}{\mathcal G}
\newcommand{\ob}{\overline{b}}
\newtheorem{theorem}{Theorem}[section]
\newtheorem{proposition}[theorem]{Proposition}
\newtheorem{lemma}[theorem]{Lemma}
\theoremstyle{definition}
\numberwithin{equation}{section}
\title[DR cycles and the $n$-point function]{Double ramification cycles and the $n$-point function for the moduli space of curves}
\author{Alexandr Buryak}
\address{Department of Mathematics, ETH Z\"urich, Switzerland, and \newline
\indent Faculty of Mechanics and Mathematics, Lomonosov Moscow State University, Russian Federation}
\email{buryaksh@gmail.com}
\subjclass[2010]{Primary 14H10, Secondary 14C17}
\keywords{Moduli space of curves, intersection numbers}
\thanks{The author was supported by grant Russian Science Foundation N16-11-10260, project "Geometry and mathematical physics of integrable systems". We are grateful to R. Pandharipande and S. Shadrin for useful discussions and to an anonymous referee for a number of suggestions that helped us to improve the exposition of the paper.
}
\begin{document}

\begin{abstract}
In this paper, using the formula for the integrals of the $\psi$-classes over the double ramification cycles found by S.~Shadrin, L.~Spitz, D.~Zvonkine and the author, we derive a new explicit formula for the $n$-point function of the intersection numbers on the moduli space of curves.
\end{abstract}

\maketitle

\section{Introduction}

Consider the moduli space~$\oM_{g,n}$ of stable complex algebraic curves of genus~$g$ with~$n$ marked points. The class $\psi_i\in H^2(\oM_{g,n},\mbQ)$ is defined as the first Chern class of the line bundle over~$\oM_{g,n}$ formed by the cotangent lines at the $i$-th marked point. We define the intersection numbers by
\begin{gather}\label{eq:intersection numbers}
\<\tau_{d_1}\tau_{d_2} \cdots \tau_{d_n}\>_g:=\int_{\oM_{g,n}}\psi_1^{d_1}\psi_2^{d_2}\cdots\psi_n^{d_n}.
\end{gather}
In the unstable case $2g-2+n\le 0$ we define the bracket to be equal to zero. The bracket~\eqref{eq:intersection numbers} vanishes unless the dimension constraint 
\begin{gather*}
\sum_{i=1}^n d_i=3g-3+n
\end{gather*}
is satisfied. Introduce variables $t_0,t_1,t_2,\ldots$. The celebrated conjecture of E.~Witten~\cite{Wit91}, proved by M.~Kontsevich~\cite{Kon92}, says that for the following generating function
$$
F(t_0,t_1,\ldots):=\sum_{\substack{g\ge 0\\n\ge 1}}\sum_{d_1,\ldots,d_n\ge 0}\<\tau_{d_1}\cdots\tau_{d_n}\>_g\frac{t_{d_1}\cdots t_{d_n}}{n!},
$$
the exponent $e^F$ is a tau-function of the KdV hierarchy in the variables $T_{2i+1}=\frac{t_i}{(2i+1)!!}$.

In this paper we discuss a different generating function for the intersection numbers~\eqref{eq:intersection numbers}. Let $n\ge 1$. Introduce variables $x_1,\ldots,x_n$. Define 
$$
\mcF(x_1,\ldots,x_n):=\sum_{g\ge 0}\mcF_g(x_1,\ldots,x_n),
$$
where
\begin{gather*}
\mcF_g(x_1,\ldots,x_n):=\sum_{d_1,\ldots,d_n\ge 0}\<\tau_{d_1}\cdots\tau_{d_n}\>_g x_1^{d_1}\cdots x_n^{d_n}.
\end{gather*}
The function $\mcF(x_1,\ldots,x_n)$ is often called the $n$-point function. There are several known closed formulas for it~\cite{BDY15,BH07,Oko02}. In this paper we derive a simple new formula for~$\mcF(x_1,\ldots,x_n)$ using the result of~\cite{BSSZ15}. We believe that our approach can be useful for the understanding of the structure of more general Gromov-Witten invariants.

Let us formulate our main result. Introduce variables $a_1,a_2,\ldots,a_n$. We will use the following notations.
\begin{itemize}
\item Let $\zeta(x):=e^{\frac{x}{2}}-e^{-\frac{x}{2}}$.
\item For a permuation $\sigma\in S_n$ denote $a_i':=a_{\sigma(i)}$ and $x_i':=x_{\sigma(i)}$.
\item Finally,
$$
\begin{array}{|cc|}
a & b
\\ c & d
\end{array}
= ad-bc.
$$
\end{itemize}
We define the functions $P_n(a_1,\ldots,a_n;x_1,\ldots,x_n)$ by
\begin{align}
&P_1(a_1;x_1):=\frac{1}{x_1},\notag\\
&P_n(a_1,\ldots,a_n;x_1,\ldots,x_n):=\label{eq:main definition}\\
=&\sum_{\substack{\sigma\in S_n\\ \sigma(1) = 1}}
x'_2\cdots x'_{n-1}
\frac{
\zeta\left(
\begin{array}{|cc|}
a_1' & a_2'
\\ x_1' & x_2'
\end{array}
\right)
\zeta\left(
\begin{array}{|cc|}
a_1'+a_2' & a_3'
\\ x_1'+x_2' & x_3'
\end{array}
\right)
\cdots
\zeta\left(
\begin{array}{|cc|}
a_1' + \cdots + a_{n-1}' & a_n'
\\ x_1' + \cdots + x_{n-1}' & x_n'
\end{array}
\right)}
{\begin{array}{|cc|}
a_1' & a_2'
\\ x_1' & x_2'
\end{array}
\; \; 
\begin{array}{|cc|}
a_2' & a_3'
\\ x_2' & x_3'
\end{array}
\;
\cdots
\;
\begin{array}{|cc|}
a_{n-1}' & a_n'
\\ x_{n-1}' & x_n'
\end{array}
},\quad n\ge 2\notag.
\end{align}
At first sight it appears that for $n\ge 2$ the function $P_n(a;x)$ has simple poles along the hyperplanes $\{a_ix_j - a_j x_i=0\}$, but in~\cite[Remark 1.6]{BSSZ15} it was shown that for $n\ge 2$ the function~$P_n(a;x)$ is a power series in $x_1,\ldots,x_n$ with coefficients in $\mbC[a_1,\ldots,a_n]$. Moreover, by~\cite[Remark 1.5]{BSSZ15}, the function $P_n(a;x)$ is symmetric with respect to simultaneous permutations of the variables $a_1,\ldots,a_n$ and the variables $x_1,\ldots,x_n$. Our main result is the following theorem.

\begin{theorem}\label{theorem:main}
We have
\begin{multline}
\mcF(x_1,\ldots,x_n)=\label{eq:main formula}\\
=\frac{e^{\frac{\left(\sum x_i\right)^3}{24}}}{\left(\sum x_i\right)\prod\sqrt{2\pi x_i}}\int_{\mbR^n}e^{-\sum\frac{a_i^2}{2x_i}}P_n(\sqrt{-1}a_1,\ldots,\sqrt{-1}a_n;x_1,\ldots,x_n)da-\frac{\delta_{n,1}}{x_1^2}-\frac{\delta_{n,2}}{x_1+x_2},
\end{multline}
where $da:=da_1\cdots da_n$.
\end{theorem}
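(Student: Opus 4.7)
My overall strategy is to invoke the formula of~\cite{BSSZ15}, which expresses integrals of $\psi$-classes against double ramification cycles in terms of the function $P_n(a;x)$, and then to convert these DR integrals into the ordinary Witten intersection numbers by performing the Gaussian integration in~\eqref{eq:main formula}.

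I would first recall the identity of~\cite{BSSZ15}: $P_n(a_1,\ldots,a_n;x_1,\ldots,x_n)$ is a generating series whose coefficients are integrals of the form $\int_{\DR_g(-\sum a_i,a_1,\ldots,a_n)}\psi_1^{d_1}\cdots\psi_n^{d_n}$ on $\oM_{g,n+1}$, with the polynomiality in $a$ noted in~\cite[Remark 1.6]{BSSZ15} reflecting the polynomial dependence of $\DR_g$ on its twists. Expanding $P_n(\sqrt{-1}a;x)$ as a formal power series in $x$ with polynomial coefficients in $a$, I would execute the Gaussian integration termwise via the moment identity
\[
\frac{1}{\sqrt{2\pi x_i}}\int_{\mbR} e^{-a_i^2/(2x_i)}(\sqrt{-1}a_i)^{2k_i}\,da_i = (-1)^{k_i}(2k_i-1)!!\,x_i^{k_i},
\]
together with the vanishing of odd moments. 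This sends each even monomial $a_1^{2k_1}\cdots a_n^{2k_n}$ to $\prod_i(-1)^{k_i}(2k_i-1)!!\,x_i^{k_i}$, which is precisely the mechanism that aggregates DR contributions across all genera $g$ into one power series in the $x_i$.

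To match the resulting series with $\mcF$ I would invoke Hain's formula on the compact-type locus together with its extension to the full moduli space by Pixton's formula: the top-degree-in-$a$ part of $\DR_g$ equals $(g!\,2^g)^{-1}(\sum_i a_i^2\psi_i)^g$ modulo boundary decorations. Under the Gaussian integration, this component contributes $x_1^{d_1+e_1}\cdots x_n^{d_n+e_n}$-weighted Witten intersection numbers $\<\tau_{d_1+e_1}\cdots\tau_{d_n+e_n}\>_g$ with $\sum_i(d_i+e_i)=3g-3+n$, reproducing $\mcF$ term by term. The subleading-in-$a$ terms of $\DR_g$ in the Pixton-style expansion should repackage, after the Gaussian, into the prefactor $e^{(\sum x_i)^3/24}/\sum x_i$, while the correction terms $-\delta_{n,1}/x_1^2 - \delta_{n,2}/(x_1+x_2)$ account for the unstable moduli $\oM_{0,1}$ and $\oM_{0,2}$ that are formally included by the integral but carry no intersection numbers.

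The main technical obstacle is this final identification: showing that the intricate subleading boundary contributions in the extended Hain/Pixton formula for $\DR_g$ collapse, after Gaussian integration, to the clean exponential prefactor $e^{(\sum x_i)^3/24}$. I expect this to demand either an induction on $g$ built on the recursive structure of Pixton's formula, or a direct algebraic reorganization exploiting the string and dilaton equations satisfied by $\mcF$.
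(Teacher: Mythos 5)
Your strategy diverges from the paper's at the decisive step, and the divergence is a genuine gap rather than a stylistic difference. The paper never invokes Hain's or Pixton's formula for the class $\DR_g$. Instead it introduces $g$ auxiliary marked points with multiplicities $b_1,\ldots,b_g$ (plus one more carrying $-\sum a_i-\sum b_j$) and uses only the pushforward identities $\pi_{g*}\DR_g(a_1,\ldots,a_n,-A-B,b_1,\ldots,b_g)=g!\,b_1^2\cdots b_g^2\,[\oM_{g,n+1}]$ and $\pi_{g*}\DR_m=0$ for $m<g$. This converts $g!\,b_1^2\cdots b_g^2\,(\sum x_i)\,\mcF_g$ into the lowest-degree part of an explicitly computable series (Proposition~\ref{proposition:series with forgotten points}); extracting $\Coef_{b_1^2\cdots b_g^2}$ then produces exactly the factors $(-x_i)^{m_i}(2m_i-1)!!$ and the $e^{X^3/24}$ prefactor (the latter from the subset of forgotten $b$-points, each contributing the $b_i^2$-coefficient $X^2/24$ of $S(b_iX)$), which is precisely what the Gaussian transform of $P_n(\sqrt{-1}a;x)$ encodes. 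Your route instead asks the internal structure of the cycle $\DR_g$ to do this work, and you leave unproven the assertion that the subleading boundary terms of the Pixton expansion collapse after Gaussian integration into the prefactor $e^{(\sum x_i)^3/24}$. That assertion \emph{is} the theorem; announcing that it ``should demand an induction or a reorganization'' does not establish it.

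Moreover, even your leading-term matching fails as stated. Isolating the contribution of $\frac{1}{2^g g!}\bigl(\sum_i a_i^2\psi_i\bigr)^g$ to $\int_{\DR_g}\prod_i\psi_i^{d_i}$ and applying your moment identity, the monomial $\prod_i a_i^{2e_i}$ with $\sum e_i=g$ acquires the weight
\begin{equation*}
\frac{1}{2^g g!}\binom{g}{e_1,\ldots,e_n}\prod_{i=1}^n(-1)^{e_i}(2e_i-1)!!\,x_i^{e_i}
=\prod_{i=1}^n(-1)^{e_i}\frac{(2e_i)!}{\bigl(2^{e_i}e_i!\bigr)^2}\,x_i^{e_i},
\end{equation*}
which is not $\prod_i x_i^{e_i}$; so the top-degree-in-$a$ part alone does not ``reproduce $\mcF$ term by term,'' and the signs and central binomial factors must cancel against lower-order terms you have not controlled. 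There is also an inaccuracy at the outset: the generating series of $\int_{\DR_g(-\sum a_i,a_1,\ldots,a_n)}\psi_1^{d_1}\cdots\psi_n^{d_n}$ is not $P_n(a;x)$ but $\frac{S\left(\left(\sum a_i\right)X\right)}{S(X)}P_n(a;x)$ with $X=\sum x_i$, by Lemma~\ref{lemma:setting 0}. Finally, the unstable corrections $-\delta_{n,1}/x_1^2-\delta_{n,2}/(x_1+x_2)$ and the cases $n=1,2$ are handled in the paper by a string-equation reduction to $n\ge 3$; your sketch does not address this reduction.
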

Let us make a few comments about the right-hand side of equation~\eqref{eq:main formula} and, in particular, show that it is a power series in $x_1,\ldots,x_n$. Note that for $x>0$ and $d\ge 0$ we have
\begin{gather}\label{eq:Gauss integral}
\frac{1}{\sqrt{2\pi x}}\int_\mbR a^d e^{-\frac{a^2}{2x}}da=
\begin{cases}
(d-1)!!x^{\frac{d}{2}},&\text{if $d$ is even},\\
0,&\text{if $d$ is odd}.
\end{cases}
\end{gather}
Therefore, the transformation
\begin{gather}\label{eq:Laplace transformation}
\frac{1}{\prod\sqrt{2\pi x_i}}\int_{\mbR^n}e^{-\sum\frac{a_i^2}{2x_i}}P_n(\sqrt{-1}a;x)da
\end{gather}
just means that we replace each monomial $a_1^{d_1}\cdots a_n^{d_n}$ in $P_n(a;x)$ by 
$$
(-1)^{\frac{1}{2}\sum d_i}\prod\left((d_i-1)!!x_i^{\frac{d_i}{2}}\right),
$$
if all exponents $d_i$ are even and by zero otherwise. The transformation~\eqref{eq:Laplace transformation} can also be interpreted as the Laplace transformation. For $n\ge 3$, in \cite[Remark 1.6]{BSSZ15} it was shown that the function $\frac{P_n(a;x)}{x_1+\cdots+x_n}$ is a power series in $x_1,\ldots,x_n$ with coefficients from~$\mbC[a_1,\ldots,a_n]$. Therefore, the right-hand side of~\eqref{eq:main formula} is a power series in~$x_1,\ldots,x_n$.  In the case $n=2$, again from~\cite[Remark 1.6]{BSSZ15} we know that $\frac{P_2(a_1,a_2;x_1,x_2)}{x_1+x_2}-\frac{1}{x_1+x_2}$ is a power series in~$x_1,x_2$ with coefficients from $\mbC[a_1,a_2]$. This implies that the right-hand side of~\eqref{eq:main formula} is a power series in~$x_1$ and~$x_2$. For $n=1$ we immediately see that the right-hand side of~\eqref{eq:main formula} is equal to $\frac{e^{\frac{x_1^3}{24}}-1}{x_1^2}$ that is of course a power series in~$x_1$.

Let us describe briefly the plan of the proof of Theorem~\ref{theorem:main}. The double ramification cycle~$\DR_g(a_1,\ldots,a_n)$ is a cohomology class in $H^{2g}(\oM_{g,n},\mbQ)$. It depends on a list of integers $a_1,\ldots,a_n$ satisfying $\sum a_i=0$. In~\cite{BSSZ15} the authors derived an explicit formula for the generating series 
$$
\sum_{\substack{g\ge 0\\2g-2+n>0}}\sum_{d_1,\ldots,d_n\ge 0}\left(\int_{\oM_{g,n}}\DR_g(a_1,\ldots,a_n)\psi_1^{d_1}\cdots\psi_n^{d_n}\right)x_1^{d_1}\cdots x_n^{d_n}.
$$   
We recall it in Section~\ref{section:double ramification cycles}. Denote by $\pi_m\colon\oM_{g,n}\to\oM_{g,n-m}$ the forgetful map that forgets the last~$m$ marked points. Let $a_1,\ldots,a_n$ and $b_1,\ldots,b_g$ be arbitrary integers and let $A:=\sum a_i$ and $B:=\sum b_j$. The push-forward $\pi_{g*}\DR_g(a_1,\ldots,a_n,-A-B,b_1,\ldots,b_g)$ is a cohomology class of degree~$0$ and it is equal to $g!b_1^2b_2^2\cdots b_g^2$ times the unit. This observation together with the string equation for the intersection numbers~\eqref{eq:intersection numbers} implies that
\begin{multline*}
g!b_1^2\cdots b_g^2\left(\sum x_i\right)\mcF_g(x_1,\ldots,x_n)=\\
=\sum_{d_1,\ldots,d_n\ge 0}\left(\int_{\oM_{g,n+g+1}}\pi_{g*}\DR_g(a_1,\ldots,a_n,-A-B,b_1,\ldots,b_g)\psi_1^{d_1}\cdots\psi_n^{d_n}\right)x_1^{d_1}\cdots x_n^{d_n}.
\end{multline*}
Moreover, this polynomial is the term of the lowest degree $3g-2+n$ with respect to the $x$-variables in the infinite series
\begin{gather*}
\sum_{m\ge 0}\sum_{d_1,\ldots,d_n\ge 0}\left(\int_{\oM_{g,n+g+1}}\pi_{g*}\DR_m(a_1,\ldots,a_n,-A-B,b_1,\ldots,b_g)\psi_1^{d_1}\cdots\psi_n^{d_n}\right)x_1^{d_1}\cdots x_n^{d_n}.
\end{gather*}
In Section~\ref{section:double ramification cycles} we derive an explicit formula for this series and then in Section~\ref{section:proof of the main theorem} prove Theorem~\ref{theorem:main}.

In Section~\ref{section:examples} we do an explicit computation of the one-point and the two-point functions using our general formula.


\section{Double ramification cycles}\label{section:double ramification cycles}

Let $n\ge 2$ and let $a_1,\ldots,a_n$ be a list of integers satisfying $\sum a_i=0$. The double ramification cycle $\DR_g(a_1,\ldots,a_n)$ is a cohomology class in $H^{2g}(\oM_{g,n},\mbQ)$. If not all of $a_i$'s are equal to zero, then the restriction
$$
\left.\DR_g(a_1,\ldots,a_n)\right|_{\cM_{g,n}}
$$
to the moduli space of smooth curves $\cM_{g,n}\subset\oM_{g,n}$ can be defined as the Poincar\'e dual to the locus of pointed smooth curves~$[C,p_1,\ldots,p_n]$ satisfying $\mathcal O_C\left(\sum a_ip_i\right)\cong\mathcal O_C$. We refer the reader, for example, to~\cite{BSSZ15} for the general definition of the double ramification cycle on the whole moduli space~$\oM_{g,n}$. We will often consider the Poincar\'e dual to the double ramification cycle~$\DR_g(a_1,\ldots,a_n)$. It is an element of $H_{2(2g-3+n)}(\oM_{g,n},\mbQ)$ and, abusing our notation a little bit, it will also be denoted by $\DR_g(a_1,\ldots,a_n)$. An explicit formula for the double ramification cycle~$\DR_g(a_1,\ldots,a_n)$ in terms of tautological classes in the cohomology~$H^*(\oM_{g,n},\mbQ)$ was recently obtained in~\cite{JPPZ16}. In particular, it implies that the double ramification cycle~$\DR_g(a_1,\ldots,a_n)$ depends polynomially on the parameters~$a_1,\ldots,a_n$.  

Let us now formulate the main result of~\cite{BSSZ15}. Let $d_1,\ldots,d_n$ be non-negative integers satisfying $\sum d_i=2g-3+n$. Then the integral
\begin{gather}\label{eq:DR integral}
\int_{\DR_g(a_1,\ldots,a_n)}\psi_1^{d_1}\cdots\psi_n^{d_n}
\end{gather}
is equal to the coefficient of $x_1^{d_1}\cdots x_n^{d_n}$ in the generating function
\begin{gather}\label{eq:DR generating series}
\frac{P_n(a_1,\ldots,a_n;x_1,\ldots,x_n)}{\zeta(x_1+\cdots+x_n)}-\frac{\delta_{n,2}}{x_1+x_2}.
\end{gather}
As it was discussed in the introduction, this generating series is a power series in $x_1,\ldots,x_n$ with coefficients from $\mbC[a_1,\ldots,a_n]$. We see that the integral~\eqref{eq:DR integral} is a polynomial in $a_1,\ldots,a_n$. Note that $\zeta(x)=\sum_{i\ge 0}\frac{x^{2i+1}}{2^{2i}(2i+1)!}$, then from the definition~\eqref{eq:main definition} it is easy to see that the series~\eqref{eq:DR generating series} has the form
$$
\frac{P_n(a_1,\ldots,a_n;x_1,\ldots,x_n)}{\zeta(x_1+\cdots+x_n)}-\frac{\delta_{n,2}}{x_1+x_2}=\sum_{\substack{j\ge 0\\2j-2+n>0}}\sum_i f_{i,j}(a)g_{i,j}(x),
$$
where $g_{i,j}(x)\in\mbC[x_1,\ldots,x_n]$ is a polynomial of degree $2j-3+n$, $f_{i,j}(a)\in\mbC[a_1,\ldots,a_n]$ and for any fixed~$j$ the second summation is finite. From this we conclude that all terms in the series~\eqref{eq:DR generating series} have the geometrical meaning and, thus,
\begin{gather}\label{eq:generating series of DR integrals}
\sum_{\substack{g\ge 0\\2g-2+n>0}}\sum_{d_1,\ldots,d_n\ge 0}\left(\int_{\DR_g(a_1,\ldots,a_n)}\psi_1^{d_1}\cdots\psi_n^{d_n}\right)x_1^{d_1}\cdots x_n^{d_n}=\frac{P(a_1,\ldots,a_n;x_1,\ldots,x_n)}{\zeta(x_1+\cdots+x_n)}-\frac{\delta_{n,2}}{x_1+x_2}.
\end{gather}

Now we want to generalize formula~\eqref{eq:generating series of DR integrals} for the integrals over the double ramification cycles with forgotten points. We begin with the following lemma.

\begin{lemma}\label{lemma:setting 0}
For $n\ge 2$ we have
$$
\left.P_n(a_1,\ldots,a_n;x_1,\ldots,x_n)\right|_{x_n=0}=\frac{\zeta(a_n(x_1+\cdots+x_{n-1}))}{a_n}P_{n-1}(a_1,\ldots,a_{n-1};x_1,\ldots,x_{n-1}).
$$
\end{lemma}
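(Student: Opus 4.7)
The plan is to exploit the explicit sum-over-permutations definition of $P_n$: identify which summands survive upon setting $x_n=0$, and then match the remainder to $P_{n-1}$ with the predicted extra factor.

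I would start by noting that each summand in the definition of $P_n$ carries the prefactor $x'_2\cdots x'_{n-1}$. Since $\sigma(1)=1$, the value $n$ is placed by $\sigma$ at some position $k=\sigma^{-1}(n)\in\{2,\ldots,n\}$; if $k\in\{2,\ldots,n-1\}$, then $x'_k=x_n$ appears in this prefactor and the term vanishes at $x_n=0$. Hence only the permutations with $\sigma(n)=n$ contribute, and these are in bijection with permutations $\sigma'\in S_{n-1}$ satisfying $\sigma'(1)=1$ via restriction.

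Next I would regroup each surviving term uniformly as
\[
\bigl(x'_{n-1}\cdot[\text{summand of }P_{n-1}\text{ indexed by }\sigma']\bigr)\cdot\frac{\zeta_n}{\Delta_{n-1,n}},
\]
where $\zeta_n$ is the last numerator $\zeta$ factor and $\Delta_{n-1,n}$ is the last denominator determinant in the definition of $P_n$. This regrouping is valid for $n\ge 3$ because the prefactor of $P_n$ differs from that of $P_{n-1}$ exactly by $x'_{n-1}$, and for $n=2$ it is to be read as $x'_1\cdot P_1(a_1;x_1)=x_1\cdot\tfrac{1}{x_1}=1$. Setting $x_n=0$, using $a'_n=a_n$ and $\{x'_1,\ldots,x'_{n-1}\}=\{x_1,\ldots,x_{n-1}\}$ as multisets, one finds
\[
\zeta_n\big|_{x_n=0}=-\zeta\bigl(a_n(x_1+\cdots+x_{n-1})\bigr),\qquad\Delta_{n-1,n}\big|_{x_n=0}=-a_n\,x'_{n-1},
\]
using the oddness of $\zeta$. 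The two minus signs cancel, and the factor $x'_{n-1}$ cancels the matching factor in the first bracket, so each surviving term reduces to
\[
[\text{summand of }P_{n-1}\text{ indexed by }\sigma']\cdot\frac{\zeta(a_n(x_1+\cdots+x_{n-1}))}{a_n}.
\]

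Summing over $\sigma'\in S_{n-1}$ with $\sigma'(1)=1$ assembles the factor $P_{n-1}(a_1,\ldots,a_{n-1};x_1,\ldots,x_{n-1})$ and yields the claim. The argument is essentially bookkeeping; the only point worth double-checking is the sign cancellation via oddness of $\zeta$, together with the interpretation of the regrouping in the degenerate case $n=2$ where $P_1$ is defined ad hoc as $1/x_1$.
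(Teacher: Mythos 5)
Your proof is correct and follows essentially the same route as the paper: only permutations with $\sigma(n)=n$ survive because $x_n$ otherwise appears in the prefactor, and the last numerator and denominator factors evaluate at $x_n=0$ to $-\zeta(a_n(x_1+\cdots+x_{n-1}))$ and $-a_nx'_{n-1}$ respectively, with the signs cancelling and the leftover $x'_{n-1}$ absorbed into the prefactor to reassemble $P_{n-1}$. The paper simply treats $n=2$ as a separate explicit computation rather than via your unified reading, but the content is identical.
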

\begin{proof}
For $n=2$ we have
$$
P_2(a_1,a_2;x_1,x_2)=\frac{\zeta(a_1x_2-a_2x_1)}{a_1x_2-a_2x_1}.
$$
Therefore,
$$
\left.P_2(a_1,a_2;x_1,x_2)\right|_{x_2=0}=\frac{\zeta(a_2x_1)}{a_2x_1}=\frac{\zeta(a_2x_1)}{a_2}P_1(a_1;x_1).
$$

Suppose $n\ge 3$. Note that if we set $x_n=0$, then the product $x_2'\cdots x_{n-1}'$ on the right-hand side of~\eqref{eq:main definition} vanishes unless $\sigma(n)=n$. Therefore, we obtain
\begin{align*}
&\left.P_n(a_1,\ldots,a_n;x_1,\ldots,x_n)\right|_{x_n=0}=\sum_{\substack{\sigma\in S_n\\\sigma(1) = 1,\,\sigma(n)=n}}
x'_2\cdots x'_{n-1}\times\\
&\hspace{1cm}\times
\frac{
\zeta\left(
\begin{array}{|cc|}
a_1' & a_2'
\\ x_1' & x_2'
\end{array}
\right)
\cdots
\zeta\left(
\begin{array}{|cc|}
a_1'+a_2'+\cdots+a_{n-2}' & a_{n-1}'
\\ x_1'+x_2'+\cdots+x_{n-2}' & x_{n-1}'
\end{array}
\right)
\zeta\left(
\begin{array}{|cc|}
a_1' + \cdots + a_{n-1}' & a_n
\\ x_1' + \cdots + x_{n-1}' & 0
\end{array}
\right)}
{\begin{array}{|cc|}
a_1' & a_2'
\\ x_1' & x_2'
\end{array}
\;
\cdots 
\;
\begin{array}{|cc|}
a_{n-2}' & a_{n-1}'
\\ x_{n-2}' & x_{n-1}'
\end{array}
\;\;
\begin{array}{|cc|}
a_{n-1}' & a_n
\\ x_{n-1}' & 0
\end{array}
}=\\
=&\frac{\zeta(a_n(x_1+\cdots+x_{n-1}))}{a_n}\sum_{\substack{\sigma\in S_{n-1}\\\sigma(1) = 1}}
x'_2\ldots x'_{n-2}
\frac{
\zeta\left(
\begin{array}{|cc|}
a_1' & a_2'
\\ x_1' & x_2'
\end{array}
\right)
\cdots
\zeta\left(
\begin{array}{|cc|}
a_1'+a_2'+\cdots+a_{n-2}' & a_{n-1}'
\\ x_1'+x_2'+\cdots+x_{n-2}' & x_{n-1}'
\end{array}
\right)
}
{\begin{array}{|cc|}
a_1' & a_2'
\\ x_1' & x_2'
\end{array}
\;
\cdots 
\;
\begin{array}{|cc|}
a_{n-2}' & a_{n-1}'
\\ x_{n-2}' & x_{n-1}'
\end{array}
}=\\
=&\frac{\zeta(a_n(x_1+\cdots+x_{n-1}))}{a_n}P_{n-1}(a_1,\ldots,a_{n-1};x_1,\ldots,x_{n-1}).
\end{align*}
The lemma is proved.
\end{proof}

Introduce the series $S(x):=\frac{\zeta(x)}{x}=1+\sum_{i\ge 1}\frac{x^{2i}}{2^{2i}(2i+1)!}$.
\begin{lemma}\label{lemma:generating series with zeroes}
Let $n\ge 3$, $m\ge 0$ and $a_1,\ldots,a_n,b_1,\ldots,b_m$ be integers satisfying $\sum a_i+\sum b_j=0$. Then we have
\begin{multline*}
\sum_{g\ge 0}\sum_{d_1,\ldots,d_n\ge 0}\left(\int_{\DR_g(a_1,\ldots,a_n,b_1,\ldots,b_m)}\psi_1^{d_1}\cdots\psi_n^{d_n}\right)x_1^{d_1}\cdots x_n^{d_n}=\\
=\frac{\prod_{i=1}^m S(b_i X)}{S(X)}X^{m-1}P_n(a_1,\ldots,a_n;x_1,\ldots,x_n),
\end{multline*}
where $X:=\sum x_i$.
\end{lemma}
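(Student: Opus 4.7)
The plan is to reduce the claim to equation~\eqref{eq:generating series of DR integrals}, applied with $n+m$ insertions, combined with iterated applications of Lemma~\ref{lemma:setting 0}.

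First, I would apply~\eqref{eq:generating series of DR integrals} to the full list of parameters $(a_1,\ldots,a_n,b_1,\ldots,b_m)$ with formal variables $(x_1,\ldots,x_n,y_1,\ldots,y_m)$. Since $n\ge 3$, the stability condition $2g-2+(n+m)>0$ holds for every $g\ge 0$ and the $\delta_{n+m,2}$ correction is absent. This gives
$$
\sum_{g\ge 0}\sum_{d_i,e_j\ge 0}\left(\int_{\DR_g(a,b)}\prod_i \psi_i^{d_i}\prod_j\psi_{n+j}^{e_j}\right)\prod x_i^{d_i}\prod y_j^{e_j} = \frac{P_{n+m}(a_1,\ldots,a_n,b_1,\ldots,b_m;x_1,\ldots,x_n,y_1,\ldots,y_m)}{\zeta(X+Y)},
$$
with $X=\sum x_i$ and $Y=\sum y_j$. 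The generating series on the left-hand side of the lemma is obtained by discarding all monomials with $e_j>0$, i.e.\ by specializing $y_1=\cdots=y_m=0$ on the right-hand side above.

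Second, I would evaluate $P_{n+m}(a,b;x,y)$ at $y=0$ by applying Lemma~\ref{lemma:setting 0} iteratively, setting $y_m$, then $y_{m-1}$, down to $y_1$. At each step the last argument of the current $P$ is the variable being specialized, so Lemma~\ref{lemma:setting 0} applies and extracts a factor $\zeta(b_j(X+y_1+\cdots+y_{j-1}))/b_j$ times a lower $P$. After all $m$ specializations one obtains
$$
P_{n+m}(a,b;x,0)=\left(\prod_{j=1}^{m}\frac{\zeta(b_j X)}{b_j}\right)P_n(a_1,\ldots,a_n;x_1,\ldots,x_n).
$$

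Finally, using $\zeta(x)=x\,S(x)$, each factor equals $\zeta(b_j X)/b_j = X\,S(b_j X)$ and $\zeta(X)=X\,S(X)$, which after division yields
$$
\frac{P_{n+m}(a,b;x,0)}{\zeta(X)} = \frac{\prod_{j=1}^{m} S(b_j X)}{S(X)}\,X^{m-1}\,P_n(a_1,\ldots,a_n;x_1,\ldots,x_n),
$$
as claimed. The argument is essentially bookkeeping; the only mild subtlety is tracking that Lemma~\ref{lemma:setting 0} remains applicable at each stage (which is immediate since the intermediate $P_{n+k}$ has $y_k$ as its last slot), and the interpretation $\zeta(b_j X)/b_j = X\,S(b_j X)$ gracefully handles any $b_j=0$, preserving polynomial dependence on the $b_j$'s. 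No deeper obstacle is expected.
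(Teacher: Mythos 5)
Your proposal is correct and follows essentially the same route as the paper: apply formula~\eqref{eq:generating series of DR integrals} with $n+m$ insertions, specialize the last $m$ formal variables to zero via iterated use of Lemma~\ref{lemma:setting 0}, and rewrite $\zeta(b_jX)/b_j = X\,S(b_jX)$ and $\zeta(X)=X\,S(X)$ to obtain the stated form. The remark that this interpretation keeps everything well defined when some $b_j=0$ is a nice touch that the paper leaves implicit.
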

\begin{proof}
Using equation~\eqref{eq:generating series of DR integrals} and Lemma~\ref{lemma:setting 0}, we compute
\begin{align*}
\sum_{g\ge 0}\sum_{d_1,\ldots,d_n\ge 0}&\left(\int_{\DR_g(a_1,\ldots,a_n,b_1,\ldots,b_m)}\psi_1^{d_1}\cdots\psi_n^{d_n}\right)x_1^{d_1}\cdots x_n^{d_n}=\\
=&\left.\frac{P_{n+m}(a_1,\ldots,a_n,b_1,\ldots,b_m;x_1,\ldots,x_{n+m})}{\zeta(x_1+\cdots+x_{n+m})}\right|_{x_{n+1}=\cdots=x_{n+m}=0}=\\
=&\frac{1}{\zeta(X)}\left(\prod_{i=1}^m\frac{\zeta(b_iX)}{b_i}\right)P_n(a_1,\ldots,a_n;x_1,\ldots,x_n)=\\
=&\frac{\prod_{i=1}^m S(b_i X)}{S(X)}X^{m-1}P_n(a_1,\ldots,a_n;x_1,\ldots,x_n).
\end{align*}
The lemma is proved.
\end{proof}

We keep the assumptions of the last lemma. Recall that by $\pi_m\colon\oM_{g,n+m}\to\oM_{g,n}$ we denote the forgetful map that forgets the last~$m$ marked points. For a subset $I\subset\{1,\ldots,m\}$ let
$$
B_I:=\sum_{i\in I}b_i.
$$
The following proposition generalizes formula~\eqref{eq:generating series of DR integrals}.
\begin{proposition}\label{proposition:series with forgotten points}
We have
\begin{multline}\label{eq:generating series with m forgotten points}
\sum_{g\ge 0}\sum_{d_1,\ldots,d_n\ge 0}\left(\int_{\pi_{m*}\DR_g(a_1,\ldots,a_n,b_1,\ldots,b_m)}\psi_1^{d_1}\cdots\psi_n^{d_n}\right)x_1^{d_1}\cdots x_n^{d_n}=\\
=\frac{1}{S(X)}\sum_{I_0\sqcup I_1\sqcup\ldots\sqcup I_n=\{1,\ldots,m\}}X^{|I_0|-1}\prod_{i=1}^n(-x_i)^{|I_i|}\prod_{i\in I_0}S(b_iX)P_n\left(a_1+B_{I_1},\ldots,a_n+B_{I_n};x\right).
\end{multline}
\end{proposition}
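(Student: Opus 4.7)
My plan is to prove the proposition by induction on~$m$. The base case $m=0$ is precisely equation~\eqref{eq:generating series of DR integrals} (the $\delta_{n,2}$ term is absent since we have assumed $n\ge 3$).

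For the inductive step I decompose $\pi_m=\pi_{m-1}\circ q$, where $q\colon\oM_{g,n+m}\to\oM_{g,n+m-1}$ forgets the marked point of weight~$b_m$. The projection formula rewrites the left-hand side of \eqref{eq:generating series with m forgotten points} as
\[
\sum_{g,d}\int_{\DR_g(a,b)}q^*\pi_{m-1}^*\Bigl(\prod_{i=1}^n\psi_i^{d_i}\Bigr)\,x^d.
\]
The strategy is to expand $\pi_{m-1}^*(\prod_i\psi_i^{d_i})$ on $\oM_{g,n+m-1}$ according to the inductive bubble-stratum description (a sum over partitions of $\{1,\ldots,m-1\}$), and then push the result through~$q^*$, showing that every bubble-class on $\oM_{g,n+m-1}$ extends in exactly two ways under~$q^*$ to a class on $\oM_{g,n+m}$: either the new forgotten point $n+m$ stays on the main component (placing $m\in I_0$), or it is absorbed into the bubble above some $j\in\{1,\ldots,n\}$ (placing $m\in I_j$).

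The relevant single-step pullback formulas are $q^*\psi_j=\psi_j-D_{j,n+m}$ (with the key vanishing $\psi_j\cdot D_{j,n+m}=0$, because the cotangent line at~$j$ is trivial on the $\mbP^1$-bubble $\oM_{0,3}$) and
\[
q^*D_{j,T}^{(n+m-1)}=D_{j,T}^{(n+m)}+D_{j,T\cup\{n+m\}}^{(n+m)},
\]
which encode precisely the above dichotomy. Pairwise disjointness $D_{j,S}\cdot D_{j,S'}=0$ for $S\ne S'$ (the forgotten point $n+m$ lies on at most one bubble) keeps the expansion clean, and iterating this across all~$m$ forgotten points shows that $\pi_m^*(\prod\psi_i^{d_i})$ expands as a sum indexed precisely by partitions $\{1,\ldots,m\}=I_0\sqcup I_1\sqcup\cdots\sqcup I_n$.

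For each such partition, the splitting property of the $\DR$ cycle along the corresponding boundary stratum gives, on each bubble above~$i$, the $\DR_0$ with weights $a_i,(b_k)_{k\in I_i},-(a_i+B_{I_i})$ (the fundamental class of $\oM_{0,|I_i|+2}$), and on the main component the cycle $\DR_g(a_1+B_{I_1},\ldots,a_n+B_{I_n},(b_k)_{k\in I_0})$ with the nodes re-labelled as the $n$ marked points with shifted weights. The self-intersection formula turns each $(-D)^{d_i}$ into a power of~$-\psi_i$ on the main component, producing the factor $\prod_i(-x_i)^{|I_i|}$ after summing with weight $x^d$. Lemma~\ref{lemma:generating series with zeroes} then evaluates each residual main-component generating series as $\frac{\prod_{k\in I_0}S(b_kX)}{S(X)}X^{|I_0|-1}P_n(a+B;x)$, and summing over all partitions reproduces the right-hand side of~\eqref{eq:generating series with m forgotten points}.

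The main technical obstacle is the case $|I_i|\ge 2$ (several forgotten points absorbed into the same bubble above~$i$): the bubble factor is the positive-dimensional moduli space $\oM_{0,|I_i|+2}$, the bubble-side $\psi$-classes no longer vanish, and the normal-bundle self-intersection formula mixes both sides of each node. One must verify that, after integrating the restricted $\DR_0$ and the induced $\psi$-powers on each bubble factor, the net contribution collapses to the single factor $(-x_i)^{|I_i|}$ with the single shift $a_i\mapsto a_i+B_{I_i}$ predicted by~\eqref{eq:generating series with m forgotten points}, rather than producing more complicated bubble-tree configurations.
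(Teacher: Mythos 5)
Your target identity is correct and your final assembly (DR splitting on each stratum, reduction of $\psi$-powers, then Lemma~\ref{lemma:generating series with zeroes} on the main component) matches the paper's, but the core of the argument is missing exactly where you flag it. You propose to expand $\pi_m^*\bigl(\prod_i\psi_i^{d_i}\bigr)$ all at once into boundary strata indexed by partitions $I_0\sqcup\cdots\sqcup I_n$, and then, for each $i$ with $|I_i|\ge 2$, you must evaluate the integral over the positive-dimensional bubble $\oM_{0,|I_i|+2}$ of the restricted $\DR_0$-class together with the $\psi$-powers at the node that the excess-intersection (self-intersection) formula produces on both branches. You write that ``one must verify'' that this collapses to the single factor $(-x_i)^{|I_i|}$ with the single shift $a_i\mapsto a_i+B_{I_i}$; that verification is precisely the content of the proposition in the hard case, and it is not supplied. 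There is also a structural problem with the induction: your inductive hypothesis is the generating-series identity \eqref{eq:generating series with m forgotten points}, but what you actually invoke at the inductive step is a cohomological expansion of $\pi_{m-1}^*\bigl(\prod\psi_i^{d_i}\bigr)$ valid against an arbitrary cycle, which is a strictly stronger statement that you have neither formulated nor proved.

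The paper sidesteps the positive-dimensional bubbles entirely. It establishes the one-step recursion \eqref{eq:remembering one point}: writing $\pi_1^*\psi_i^d=\psi_i^d-\delta_0^{\{i,n+1\}}\pi_1^*\psi_i^{d-1}$, the correction term is supported on the divisor $\delta_0^{\{i,n+1\}}$, whose rational component is always $\oM_{0,3}$ (a point); the DR splitting there replaces $a_i$ by $a_i+b_m$ \emph{inside the double ramification cycle on the smaller moduli space} before the next point is forgotten. Iterating this $m$ times yields \eqref{eq:DR integral with forgotten points} directly, with the sign $(-1)^{m-|I_0|}$ and the reduction $d_i\mapsto d_i-|I_i|$ appearing automatically, and no excess-intersection computation on a bubble of positive dimension is ever needed. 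If you want to complete your argument, the cleanest repair is to replace the all-at-once stratum expansion by this one-point-at-a-time recursion.
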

\begin{proof}
For a subset $I=\{i_1,i_2,\ldots,i_{|I|}\}\subset\{1,\ldots,m\}$, where $i_1<i_2<\cdots<i_{|I|}$, we denote by~$\ob_I$ the string $b_{i_1},b_{i_2},\ldots,b_{i_{|I|}}$. Clearly, formula~\eqref{eq:generating series with m forgotten points} follows from Lemma~\ref{lemma:generating series with zeroes} and the equation
\begin{gather}\label{eq:DR integral with forgotten points}
\int_{\pi_{m*}\DR_g(a_1,\ldots,a_n,b_1,\ldots,b_m)}\psi_1^{d_1}\cdots\psi_n^{d_n}=\sum_{\substack{\coprod_{i=0}^n I_i=\{1,\ldots,m\}\\|I_i|\le d_i}}(-1)^{m-|I_0|}\int_{\DR_g\left(a_1+B_{I_1},\ldots,a_n+B_{I_n},\ob_{I_0}\right)}\prod_{i=1}^n\psi_i^{d_i-|I_i|}.
\end{gather}
The proof of this equation is very similar to the proof of Proposition~2.3 in~\cite{BS11}. Let us first prove that for $m\ge 1$ we have

\begin{align}
\int_{\pi_{m*}\DR_g(a_1,\ldots,a_n,b_1,\ldots,b_m)}\psi_1^{d_1}\cdots\psi_n^{d_n}=&\int_{(\pi_{m-1})_*\DR_g(a_1,\ldots,a_n,b_m,b_1,\ldots,b_{m-1})}\psi_1^{d_1}\cdots\psi_n^{d_n}\label{eq:remembering one point}\\
&-\sum_{\substack{1\le i\le n\\d_i>0}}\int_{(\pi_{m-1})_*\DR_g(a_1,\ldots,a_i+b_m,\ldots,a_n,b_1,\ldots,b_{m-1})}\psi_i^{d_i-1}\prod_{j\ne i}\psi_j^{d_j}.\notag
\end{align}
For this we write $\pi_m=\pi_1\circ\pi_{m-1}$, where $\pi_{m-1}\colon\oM_{g,n+m}\to\oM_{g,n+1}$ and $\pi_1\colon\oM_{g,n+1}\to\oM_{g,n}$. Then we have
$$
\int_{\pi_{m*}\DR_g(a_1,\ldots,a_n,b_1,\ldots,b_m)}\psi_1^{d_1}\cdots\psi_n^{d_n}=\int_{(\pi_{m-1})_*\DR_g(a_1,\ldots,a_n,b_m,b_1,\ldots,b_{m-1})}\pi_1^*\left(\psi_1^{d_1}\cdots\psi_n^{d_n}\right).
$$
For an integer $N\ge 2$ and a subset $J\in\{1,\ldots,N\}$, $|J|\ge 2$, denote by $\delta_0^J$ the cohomology class in $H^2(\oM_{g,N},\mbQ)$ that is Poincar\'e dual to the divisor whose generic point is a nodal curve made of one smooth component of genus $0$ with the marked points labeled by the set~$J$ and of another smooth component of genus~$g$ with the remaining marked points, joined at a separating node. If $1\le i\le n$ and $d\ge 1$, then we have $\pi_1^*\psi_i^d=\psi_i^d-\delta_0^{\{i,n+1\}}\pi_1^*\psi_i^{d-1}$. Therefore,
\begin{align*}
&\pi_1^*\left(\psi_1^{d_1}\cdots\psi_n^{d_n}\right)=\psi_1^{d_1}\cdots\psi_n^{d_n}-\sum_{\substack{1\le i\le n\\d_i>0}}\delta_0^{\{i,n+1\}}\pi_1^*\left(\psi_i^{d_i-1}\prod_{j\ne i}\psi_j^{d_j}\right).
\end{align*}
We have (see~\cite{BSSZ15} and, in particular, Section 2.1 there for the explanation of the notation~$\boxtimes$)
\begin{multline*}
\delta_0^{\{i,n+1\}}\cdot(\pi_{m-1})_*\DR_g(a_1,\ldots,a_n,b_m,b_1,\ldots,b_{m-1})=\\
=(\pi_{m-1})_*\left(\DR_0(a_i,b_m,-a_i-b_m)\boxtimes\DR_g(a_1,\ldots,\widehat{a_i},\ldots,a_n,b_1,\ldots,b_{m-1},a_i+b_m)\right).
\end{multline*}
Therefore,
\begin{multline*}
\int_{(\pi_{m-1})_*\DR_g(a_1,\ldots,a_n,b_m,b_1,\ldots,b_{m-1})}\delta_0^{\{i,n+1\}}\pi_1^*\left(\psi_i^{d_i-1}\prod_{j\ne i}\psi_j^{d_j}\right)=\\
=\int_{(\pi_{m-1})_*\DR_g(a_1,\ldots,a_i+b_m,\ldots,a_n,b_1,\ldots,b_{m-1})}\psi_i^{d_i-1}\prod_{j\ne i}\psi_j^{d_j}.
\end{multline*}
This completes the proof of equation~\eqref{eq:remembering one point}.

It is easy to see that, applying formula~\eqref{eq:remembering one point} $m$ times, we get equation~\eqref{eq:DR integral with forgotten points}. The proposition is proved.
\end{proof}

Finally, let us formulate an important geometric property of the double ramification cycle that we will use in the next section. Let $g\ge 0$, $n\ge 3$ and $a_1,\ldots,a_n,b_1,\ldots,b_g$ be integers satisfying $\sum a_i+\sum b_j=0$. Then we have (see \cite[Example 3.7]{BSSZ15})
\begin{align}
&\pi_{g*}\DR_g(a_1,\ldots,a_n,b_1,\ldots,b_g)=g!b_1^2\cdots b_g^2[\oM_{g,n}],&&\label{eq:DR and fundamental}\\
&\pi_{g*}\DR_m(a_1,\ldots,a_n,b_1,\ldots,b_g)=0,&& m<g.\label{eq:DR and zero}
\end{align}
The last property implies that the coefficient of a monomial $x_1^{d_1}\cdots x_n^{d_n}$ in the generating series~\eqref{eq:generating series with m forgotten points} is zero, if $\sum d_i<3m-3+n$. We will use this observation in the next section.


\section{Proof of Theorem~\ref{theorem:main}}\label{section:proof of the main theorem}

We divide this section in two parts. In Section~\ref{subsection:reduction} we use the string equation for the intersection numbers~\eqref{eq:intersection numbers} in order to reduce Theorem~\ref{theorem:main} to the case~$n\ge 3$. We treat this case in Section~\ref{subsection:main case}.

\subsection{Reduction to the case $n\ge 3$}\label{subsection:reduction}

The intersection numbers $\<\tau_{d_1}\cdots\tau_{d_n}\>_g$ satisfy the string equation
$$
\<\tau_0\tau_{d_1}\cdots\tau_{d_n}\>_g=\sum_{\substack{1\le i\le n\\d_i>0}}\<\tau_{d_i-1}\prod_{j\ne i}\tau_{d_j}\>_g,
$$
with one exceptional case $\<\tau_0^3\>_0=1$. The string equation implies that for $n\ge 2$ we have
\begin{gather}\label{eq:string for n-point}
\left.\mcF(x_1,\ldots,x_n)\right|_{x_n=0}=(x_1+\cdots+x_{n-1})\mcF(x_1,\ldots,x_{n-1})+\delta_{n,3}.
\end{gather}
Denote the right-hand side of~\eqref{eq:main formula} by $\mcG(x_1,\ldots,x_n)$. Let us show that the function $\mcG$ also satisfies equation~\eqref{eq:string for n-point}. We compute
\begin{align*}
&\left.\frac{1}{\prod_{i=1}^n\sqrt{2\pi x_i}}\int_{\mbR^n}e^{-\sum_{i=1}^n\frac{a_i^2}{2x_i}}P_n\left(\sqrt{-1}a_1,\ldots,\sqrt{-1}a_n;x_1,\ldots,x_n\right)da_1\cdots da_n\right|_{x_n=0}=\\
=&\left.\frac{1}{(2\pi)^{\frac{n}{2}}}\int_{\mbR^n}e^{-\sum_{i=1}^n\frac{a_i^2}{2}}P_n\left(\sqrt{-x_1}a_1,\ldots,\sqrt{-x_n}a_n;x_1,\ldots,x_n\right)da_1\cdots da_n\right|_{x_n=0}\stackrel{\text{by Lemma~\ref{lemma:setting 0}}}{=}\\
=&\frac{x_1+\cdots+x_{n-1}}{(2\pi)^{\frac{n}{2}}}\int_{\mbR^n}e^{-\sum_{i=1}^n\frac{a_i^2}{2}}P_{n-1}\left(\sqrt{-x_1}a_1,\ldots,\sqrt{-x_{n-1}}a_{n-1};x_1,\ldots,x_{n-1}\right)da_1\cdots da_n=\\
=&\frac{x_1+\cdots+x_{n-1}}{\prod_{i=1}^{n-1}\sqrt{2\pi x_i}}\int_{\mbR^{n-1}}e^{-\sum_{i=1}^{n-1}\frac{a_i^2}{2x_i}}P_{n-1}\left(\sqrt{-1}a_1,\ldots,\sqrt{-1}a_{n-1};x_1,\ldots,x_{n-1}\right)da_1\cdots da_{n-1}.
\end{align*}
Thus, we obtain
\begin{gather}\label{eq:string for G}
\left.\mcG(x_1,\ldots,x_n)\right|_{x_n=0}=(x_1+\cdots+x_{n-1})\mcG(x_1,\ldots,x_{n-1})+\delta_{n,3}.
\end{gather}
We see that if equation~\eqref{eq:main formula} holds for $n=n_0$, then it holds for all $n\le n_0$. Hence, it is sufficient to prove equation~\eqref{eq:main formula} for $n\ge 3$.

\subsection{Case $n\ge 3$}\label{subsection:main case}

We assume that $n\ge 3$. Let $g\ge 0$ and consider arbitrary integers $a_1,\ldots,a_n$ and $b_1,\ldots,b_g$. Let 
\begin{gather*}
A:=\sum_{i=1}^n a_i,\qquad B:=\sum_{i=1}^g b_i,\qquad X:=\sum_{i=1}^n x_i.
\end{gather*}
Equations~\eqref{eq:DR and fundamental} and~\eqref{eq:string for n-point} imply that 
\begin{align}
g!b_1^2\cdots b_g^2 X\mcF_g(x_1,\ldots,x_n)=&g!b_1^2\cdots b_g^2\left.\mcF_g(x_1,\ldots,x_n,x_{n+1})\right|_{x_{n+1}=0}=\notag\\
=&\sum_{d_1,\ldots,d_n\ge 0}\left(\int_{\pi_{g*}\DR_g(a_1,\ldots,a_n,-A-B,b_1,\ldots,b_g)}\psi_1^{d_1}\cdots\psi_n^{d_n}\right)x_1^{d_1}\cdots x_n^{d_n}\label{first step}
\end{align}
independently of $a_1,\ldots,a_n$. Moreover, by the remark at the end of Section~\ref{section:double ramification cycles}, expression~\eqref{first step} is equal to the term of the lowest degree~$3g-2+n$ with respect to the $x$-variables in the series
\begin{gather}\label{series}
\sum_{m\ge 0}\sum_{d_1,\ldots,d_n\ge 0}\left(\int_{\pi_{g*}\DR_m(a_1,\ldots,a_n,-A-B,b_1,\ldots,b_g)}\psi_1^{d_1}\cdots\psi_n^{d_n}\right)x_1^{d_1}\cdots x_n^{d_n}.
\end{gather}
Proposition~\ref{proposition:series with forgotten points} and Lemma~\ref{lemma:setting 0} imply that
\begin{align}\label{eq:generating series with forgotten points}
&\sum_{m\ge 0}\sum_{d_1,\ldots,d_n\ge 0}\left(\int_{\pi_{g*}\DR_m(a_1,\ldots,a_n,-A-B,b_1,\ldots,b_g)}\psi_1^{d_1}\cdots\psi_n^{d_n}\right)x_1^{d_1}\cdots x_n^{d_n}=\\
=&\frac{S((-A-B)X)}{S(X)}\sum_{\coprod_{i=0}^n I_i=\{1,\ldots,g\}}X^{|I_0|}\prod_{i=1}^n(-x_i)^{|I_i|}\prod_{i\in I_0}S(b_iX)P_n\left(a_1+B_{I_1},\ldots,a_n+B_{I_n};x\right).\notag
\end{align}
In the last expression the multiplication by $\frac{S((-A-B)X)}{S(X)}$ doesn't change the lowest degree term with respect to the $x$-variables. Therefore, we obtain
\begin{align*}
g!b_1^2\cdots b_g^2&X\mcF_g(x_1,\ldots,x_n)=\\
=&\left[\sum_{\coprod_{i=0}^n I_i=\{1,\ldots,g\}}X^{|I_0|}\prod_{i=1}^n(-x_i)^{|I_i|}\prod_{i\in I_0}S(b_iX)P_n\left(a_1+B_{I_1},\ldots,a_n+B_{I_n};x\right)\right]_{3g-2+n}=\\
=&\left[\sum_{\coprod_{i=0}^nI_i=\{1,\ldots,g\}}X^{|I_0|}\prod_{i=1}^n(-x_i)^{|I_i|}\prod_{i\in I_0}S(b_iX)P_n\left(B_{I_1},\ldots,B_{I_n};x\right)\right]_{3g-2+n},
\end{align*}
where by $[\cdot]_d$ we denote the degree $d$ part with respect to the $x$-variables. Equivalently, we can write
\begin{multline}\label{second step}
g!X\mcF_g(x_1,\ldots,x_n)=\\
=\left[\Coef_{b_1^2\cdots b_g^2}\sum_{\coprod_{i=0}^n I_i=\{1,\ldots,g\}}X^{|I_0|}\prod_{i=1}^n(-x_i)^{|I_i|}\prod_{i\in I_0}S(b_iX)P_n\left(B_{I_1},\ldots,B_{I_n};x\right)\right]_{3g-2+n}.
\end{multline}
From the definition~\eqref{eq:main definition} we see that the series~$P_n(a_1,\ldots,a_n;x_1,\ldots,x_n)$ satisfies the following homogeneity property:
$$
P_n(\lambda^{-1}a_1,\ldots,\lambda^{-1}a_n;\lambda x_1,\ldots,\lambda x_n)=\lambda^{n-2}P_n(a_1,\ldots,a_n;x_1,\ldots,x_n).
$$
Therefore, the expression in the square brackets on the right-hand side of~\eqref{second step} has automatically degree~$3g-2+n$ in the $x$-variables. Since $S(z)=1+\frac{z^2}{24}+\ldots$, we have
$$
\Coef_{b_1^2\cdots b_g^2}\left(\prod_{i\in I_0}S(b_iX)P_n\left(B_{I_1},\ldots,B_{I_n};x\right)\right)=\frac{X^{2|I_0|}}{24^{|I_0|}}\Coef_{\prod_{i\notin I_0}b_i^2}P_n\left(B_{I_1},\ldots,B_{I_n};x\right).
$$
Note that
$$
\Coef_{\prod_{i\notin I_0}b_i^2}\prod_{i=1}^n B_{I_i}^{d_i}=
\begin{cases}
\prod_{i=1}^n\frac{d_i!}{2^{d_i/2}},&\text{if $d_i=2|I_i|$},\\
0,&\text{otherwise}.
\end{cases}
$$
Therefore, we get
\begin{align*}
\Coef_{b_1^2\cdots b_g^2}&\sum_{\coprod_{i=0}^nI_i=\{1,\ldots,g\}}X^{|I_0|}\prod_{i=1}^n(-x_i)^{|I_i|}\prod_{i\in I_0}S(b_iX)P_n\left(B_{I_1},\ldots,B_{I_n};x\right)=\\
=&\sum_{\coprod_{i=0}^nI_i=\{1,\ldots,g\}}\frac{X^{3|I_0|}}{24^{|I_0|}}\prod_{i=1}^n\left((-x_i)^{|I_i|}\frac{(2|I_i|)!}{2^{|I_i|}}\right)\Coef_{\prod_{i=1}^na_i^{2|I_i|}}P_n(a;x)=\\
=&\sum_{\substack{m_0,m_1,\ldots,m_n\ge 0\\m_0+\cdots+m_n=g}}\frac{g!}{m_0!\cdots m_n!}\left(\frac{X^3}{24}\right)^{m_0}\prod_{i=1}^n\left((-x_i)^{m_i}\frac{(2m_i)!}{2^{m_i}}\right)\Coef_{\prod_{i=1}^na_i^{2m_i}}P_n(a;x)=\\
=&\sum_{\substack{m_0,m_1,\ldots,m_n\ge 0\\m_0+\cdots+m_n=g}}\frac{g!}{m_0!}\left(\frac{X^3}{24}\right)^{m_0}\prod_{i=1}^n\left((-x_i)^{m_i}(2m_i-1)!!\right)\Coef_{\prod_{i=1}^na_i^{2m_i}}P_n(a;x).
\end{align*}
Finally, we obtain
\begin{align*}
X\mcF(x_1,\ldots,x_n)=&\sum_{g\ge 0}X\mcF_g(x_1,\ldots,x_n)=\\
=&\sum_{m_0,m_1,\ldots,m_n\ge 0}\frac{1}{m_0!}\left(\frac{X^3}{24}\right)^{m_0}\prod_{i=1}^n\left((-x_i)^{m_i}(2m_i-1)!!\right)\Coef_{\prod_{i=1}^na_i^{2m_i}}P_n(a;x)\stackrel{\text{by~\eqref{eq:Gauss integral}}}{=}\\
=&\frac{e^{\frac{X^3}{24}}}{\prod\sqrt{2\pi x_i}}\int_{\mbR^n}e^{-\sum\frac{a_i^2}{2x_i}}P_n\left(\sqrt{-1}a;x\right)da.
\end{align*}
This completes the proof of the theorem.


\section{Examples}\label{section:examples}

For $n=1$ Theorem~\ref{theorem:main} immediately gives the well-known formula (see e.g.~\cite{FP00})
$$
\mcF(x_1)=\frac{e^{\frac{x_1^3}{24}}-1}{x_1^2}.
$$

Suppose $n=2$. We have 
\begin{align*}
P_2(a_1,a_2;x_1,x_2)=&\frac{\zeta(a_1x_2-a_2x_1)}{a_1x_2-a_2x_1}=S(a_1x_2-a_2x_1)=\sum_{n\ge 0}\frac{(a_1x_2-a_2x_1)^{2n}}{(2n+1)!2^{2n}}=\\
=&\sum_{n\ge 0}\sum_{m_1+m_2=2n}\frac{(a_1x_2)^{m_1}(-a_2x_1)^{m_2}}{(2n+1)2^{2n}m_1!m_2!}.
\end{align*}
Therefore, we obtain
\begin{align*}
\frac{e^{\frac{(x_1+x_2)^3}{24}}}{2\pi\sqrt{x_1x_2}}\int_{\mbR^2}&e^{-\frac{a_1^2}{2x_1}-\frac{a_2^2}{2x_2}}P_2(\sqrt{-1}a_1,\sqrt{-1}a_2;x_1,x_2)da=\\
=&\frac{e^{\frac{(x_1+x_2)^3}{24}}}{2\pi\sqrt{x_1x_2}}\sum_{n\ge 0}\sum_{m_1+m_2=n}\int_{\mbR^2}e^{-\frac{a_1^2}{2x_1}-\frac{a_2^2}{2x_2}}\left(\frac{(-1)^na_2^{2m_1}a_1^{2m_2}x_1^{2m_1}x_2^{2m_2}}{2^{2n}(2n+1)(2m_1)!(2m_2)!}\right)da=\\
=&e^{\frac{(x_1+x_2)^3}{24}}\sum_{n\ge 0}\sum_{m_1+m_2=n}\frac{(-1)^nx_2^{m_1}x_1^{m_2}x_1^{2m_1}x_2^{2m_2}}{2^{3n}(2n+1)m_1!m_2!}=\\
=&e^{\frac{x_1^3+x_2^3}{24}}e^{\frac{x_1x_2(x_1+x_2)}{8}}\sum_{n\ge 0}\frac{(-1)^n(x_1x_2(x_1+x_2))^n}{8^n(2n+1)n!}=\\
=&e^{\frac{x_1^3+x_2^3}{24}}\sum_{n\ge 0}\left(\frac{x_1x_2(x_1+x_2)}{8}\right)^n\sum_{m_1+m_2=n}\frac{(-1)^{m_2}}{m_1!m_2!(2m_2+1)}.
\end{align*}
Let $C_n:=\sum_{m_1+m_2=n}\frac{(-1)^{m_2}}{m_1!m_2!(2m_2+1)}$. For $n\ge 1$ we compute 
$$
C_n=\frac{1}{n!}\int_0^1(1-y^2)^ndy=\frac{2}{(n-1)!}\int_0^1y^2(1-y^2)^{n-1}dy=-2nC_n+2C_{n-1}.
$$
Therefore, $C_n=\frac{2}{2n+1}C_{n-1}$, and, since $C_0=1$, we get $C_n=\frac{2^n}{(2n+1)!!}$. Hence, we obtain
\begin{gather*}
\frac{e^{\frac{(x_1+x_2)^3}{24}}}{2\pi\sqrt{x_1x_2}}\int_{\mbR^2}e^{-\frac{a_1^2}{2x_1}-\frac{a_2^2}{2x_2}}P_2(\sqrt{-1}a_1,\sqrt{-1}a_2;x_1,x_2)da=e^{\frac{x_1^3+x_2^3}{24}}\sum_{n\ge 0}\frac{n!}{(2n+1)!}\left(\frac{x_1x_2(x_1+x_2)}{2}\right)^n.
\end{gather*}
As a result, we get the following formula for the two-point function:
\begin{align*}
\mcF(x_1,x_2)=\frac{e^{\frac{x_1^3+x_2^3}{24}}}{(x_1+x_2)}\sum_{n\ge 0}\frac{n!}{(2n+1)!}\left(\frac{x_1x_2(x_1+x_2)}{2}\right)^n-\frac{1}{x_1+x_2}.
\end{align*}
This formula was found by R.~Dijkgraaf (see e.g.~\cite{FP00}).

\end{document}